\documentclass[12pt]{amsart}
\usepackage[headings]{fullpage}


\usepackage{amsmath,amssymb,amsthm, bm}
\usepackage{graphicx}
\usepackage{enumerate}
\usepackage{url}
\usepackage{slashed}
\usepackage{bm}
\usepackage[german,english]{babel}

\usepackage[bookmarks=true,%
    colorlinks=true,%
    linkcolor=blue,%
    citecolor=blue,%
    filecolor=blue,%
    menucolor=blue,%
    urlcolor=blue,%
    breaklinks=true]{hyperref}

\usepackage[all]{xy}
\usepackage{verbatim}

\newtheorem{thm}{Theorem}[]
\newtheorem*{thm*}{Theorem}
\newtheorem{lem}[thm]{Lemma}

\newtheorem{cor}[thm]{Corollary}

\newtheorem{rem}[thm]{Remark}









\newcommand{\param}{{\mathchoice{\mkern1mu\mbox{\raise2.2pt\hbox{$
\centerdot$}}
\mkern1mu}{\mkern1mu\mbox{\raise2.2pt\hbox{$\centerdot$}}\mkern1mu}{
\mkern1.5mu\centerdot\mkern1.5mu}{\mkern1.5mu\centerdot\mkern1.5mu}}}

\renewcommand{\setminus}{{\smallsetminus}}

\graphicspath{{figures/}}


\begin{document}
\title[Simple curves in covering spaces]{Finding simple curves in surface covers is undecidable}
\author{Ingrid Irmer}
\address{Mathematics Department\\
Technion, Israel Institute of Technology\\
Haifa, 32000\\
Israel 
}
\email{ingridi@technion.ac.il}
\today

\begin{abstract}
It is shown that various questions about the existence of simple closed curves in normal subgroups of surface groups are undecidable.
\end{abstract}

\maketitle

{\footnotesize
\tableofcontents
}

\section{Introduction}
\label{intro}
Let $S$ be a (not necessarily closed) pointed surface with genus at least 2, and $\tilde{S}$ a regular covering space of $S$. Given a word in $\pi_{1}(S)$ there are numerous algorithms that will determine whether this word represents a simple curve on $S$, for example \cite{BirmanSeries}, \cite{Chillingworth}, \cite{R} and \cite{Z}. However, if we want to decide whether (a) a given word representing a simple curve is contained in $\pi_{1}(\tilde{S})$, or (b) whether there exists a word in $\pi_{1}(\tilde{S})$ representing a simple curve in $S$, it turns out that these two questions are undecidable. The former is a consequence of undecidability of the word problem for finitely presented groups, and follows easily from Corollary \ref{corollary}, and undecidability of the latter is proven in this paper.

\begin{thm}
	\label{maintheorem}
There does not exist an algorithm that determines whether a normal subgroup $\pi_{1}(\tilde{S})\subset \pi_{1}(S)$ contains a word representing a homotopically nontrivial, simple curve on $S$.	
\end{thm}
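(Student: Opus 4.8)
The plan is to reduce from the word problem for finitely presented groups. Fix, once and for all, a finitely presented group $G_0=\langle x_1,\dots,x_k\mid r_1,\dots,r_m\rangle$ with unsolvable word problem (Novikov--Boone); after a harmless preliminary modification we may assume that every generator $x_i$ has nonzero image in $H_1(G_0)$, and it suffices to treat input words $w$ in the $x_i$ with $w_{\mathrm{ab}}=0$ in $H_1(G_0)$, since for the others $w\ne_{G_0}1$ is already decidable. So it is enough to produce, computably from such a $w$, a closed surface $S=\Sigma_{k+1}$ of \emph{fixed} genus together with a regular cover $\tilde S_w\to S$ --- equivalently a surjection $\phi_w\co\pi_1(S)\twoheadrightarrow Q_w$ with $\pi_1(\tilde S_w)=\ker\phi_w$ --- so that $\pi_1(\tilde S_w)$ contains a homotopically nontrivial simple closed curve if and only if $w=_{G_0}1$. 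Granting this, undecidability of the word problem in $G_0$ immediately gives the theorem; one could equally start from the triviality problem via Adian--Rabin.

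For the construction, write $\pi_1(\Sigma_{k+1})=\langle a_0,a_1,\dots,a_k,b_0,b_1,\dots,b_k\mid\prod_{i=0}^k[a_i,b_i]\rangle$ and identify $x_i$ with $a_i$ for $1\le i\le k$. Put $Q_w:=\pi_1(\Sigma_{k+1})/N_w$, where $N_w$ is the normal closure of $r_1(a_1,\dots,a_k),\dots,r_m(a_1,\dots,a_k)$ together with one further relator whose effect is to force $a_0$ to equal $w(a_1,\dots,a_k)$ in $Q_w$; this extra relator should be taken not literally as $a_0\,w(a_1,\dots,a_k)^{-1}$ but in a ``hidden'' form that can never be a simple closed curve (for instance a filling word), while still producing that identification (see the last paragraph). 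Let $\phi_w$ be the quotient map; it and $Q_w$ are computable from $w$. For the forward implication, consider the homomorphism $\rho\co\pi_1(\Sigma_{k+1})\to F_k=\langle a_1,\dots,a_k\rangle$ with $a_0\mapsto w(a_1,\dots,a_k)$, $b_i\mapsto 1$ for all $i$, and $a_i\mapsto a_i$ for $i\ge1$: it is well defined because it kills the surface relator, it kills the extra relator, and it sends each $r_j$ to $r_j$, hence descends to a split surjection $\bar\rho\co Q_w\twoheadrightarrow G_0$ with section $x_i\mapsto\bar a_i$. Thus $\langle\bar a_1,\dots,\bar a_k\rangle\cong G_0$ is a retract of $Q_w$ and $\bar a_0$ corresponds to $w\in G_0$, so the non-separating simple closed curve $a_0$ lies in $\ker\phi_w$ exactly when $w=_{G_0}1$; in that case $\pi_1(\tilde S_w)$ contains a homotopically nontrivial simple closed curve.

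It remains, conversely, to show that if $w\ne_{G_0}1$ then $\ker\phi_w$ contains no homotopically nontrivial simple closed curve at all, and this is where the real work lies. The intended route has three steps: (i) invoke the classification of simple closed curves on $\Sigma_{k+1}$ --- a single mapping class group orbit of non-separating curves, together with finitely many orbits of separating ones; (ii) use $\bar\rho\co Q_w\to G_0$ together with the abelianization of $Q_w$ --- in which every $b_i$, and (by the normalization of $G_0$) every $a_i$ with $i\ge1$, has nonzero image, while $a_0$ has zero image for the inputs under consideration --- to cut the problem down to the (still infinite) family consisting of the separating curves and of the non-separating curves whose homology class lies in the span of $[a_0],\dots,[a_k]$ and whose image already vanishes in $G_0$; and (iii) for these remaining candidates, analyze the structure of $Q_w$ directly: eliminating $a_0$ presents it as a one-relator product $\big(G_0*F_{k+1}\big)/\langle\langle s_w\rangle\rangle$ of ``surface-group type'', on which a Bass--Serre / normal-form argument should force a conjugacy class to become trivial only if it is a power of $a_0$ --- and $a_0$ is trivial precisely when $w=_{G_0}1$. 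Step (iii) is the main obstacle; it is also what dictates the ``hidden'' shape of the extra relator, since one must simultaneously rule out that any product of conjugates of the defining relators is itself a simple closed curve (a filling relator works because a simple closed curve meets every filling curve). Finally, one should note that the cover $\tilde S_w$ is necessarily of infinite degree --- $Q_w$ retracts onto the infinite group $G_0$ --- which is consistent with the fact that for finite covers the problem is decidable. With step (iii) in hand, $\pi_1(\tilde S_w)$ contains a homotopically nontrivial simple closed curve iff $w=_{G_0}1$, and as $\tilde S_w$ ranges over regular covers of a single surface while $w$ ranges over all words, undecidability of the word problem in $G_0$ yields the theorem.
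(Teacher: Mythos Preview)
Your reduction idea is natural, but step~(iii) is not a detail to be filled in --- it is the whole theorem, and as sketched it does not go through.

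The concrete obstruction: for \emph{every} $w$, the kernel $N_w=\ker\phi_w$ contains the fixed normal subgroup $\langle\langle r_1(a_1,\dots,a_k),\dots,r_m(a_1,\dots,a_k)\rangle\rangle$ coming from the relators of $G_0$. You supply no mechanism ensuring that this subgroup already contains no simple closed curve on $\Sigma_{k+1}$. The ``filling word'' device addresses only the one extra relator, and the parenthetical justification (``a simple closed curve meets every filling curve'') only says a filling curve is not itself simple --- it says nothing about the normal closure. If, say, some product of conjugates of the $r_j$ equals $a_1a_2$ in $\pi_1(\Sigma_{k+1})$ (a simple nonseparating curve that passes both your abelianisation filter and the retraction $\bar\rho$), then $\ker\phi_w$ contains a simple curve for all $w$ and your equivalence fails. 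The stated conclusion of step~(iii) --- that a conjugacy class dies in $Q_w$ only if it is a power of $a_0$ --- is false as written (each $r_j$ dies and is not a power of $a_0$), and even restricted to simple conjugacy classes no argument is given. Since you fix $G_0$ once and for all, you would in effect need to exhibit a specific finitely presented group with unsolvable word problem whose relators, read as words in standard surface-group generators, normally generate a subgroup disjoint from all simple closed curves; nothing in the proposal approaches this.

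The paper avoids the difficulty by arranging the construction so that every element of the kernel carries an algebraic certificate of non-simplicity. It first passes to the mod~$2$ homology cover $S_2$, whose kernel already consists only of elements that are homologically nonprimitive or null-homologous. It then runs a Rabin-style HNN tower on generators chosen inside $[\pi_1(S_2),[\pi_1(S_2),\pi_1(S_2)]]$, and invokes the criterion that no element of $[\pi_1(S),[\pi_1(S),\pi_1(S)]]$ represents a simple curve. The dichotomy is then clean: if $w=1$ the tower collapses and the cover is just $S_2$, which does contain simple curves; if $w\neq1$, every element of the kernel is either nonprimitive in $H_1(S;\mathbb{Z})$ or lies in $[\pi_1(S),[\pi_1(S),\pi_1(S)]]$, and in either case cannot be simple. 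This uniform obstruction, valid for all kernel elements simultaneously, is precisely the idea missing from your sketch.
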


When $\pi_{1}(\tilde{S})$ does not contain a word representing a simple curve in $\pi_{1}(S)$, we will say that the cover $\tilde{S}$ does not have any simple curves. Note that ``simpleness'' here refers to $S$, not to $\tilde{S}$.

\begin{rem}
	Most questions about groups only depend on the groups up to isomorphism. However, Theorem \ref{maintheorem} is not just about the group $\pi_{1}(\tilde{S})$, but depends on how $\pi_{1}(\tilde{S})$ is embedded in the larger group $\pi_{1}(S)$.
\end{rem}

Theorem \ref{maintheorem} arose out of an attempt to devise an algorithm for testing the simple loop conjecture. Let $f: S \rightarrow M$ be a 2-sided immersion of a closed surface $S$ in a 3-manifold $M$, and denote by $f_{\sharp}:\pi_{1}(S) \rightarrow \pi_{1}(M)$ the induced map on the fundamental group. The simple loop conjecture states that if the normal subgroup $ker(f_{\sharp})$ is nontrivial, then there exists an essential simple closed curve in $ker(f_{\sharp})$. The simple loop conjecture has been proven in many special cases, for example Seifert fibered 3-manifolds, \cite{Hass}, graph 3-manifolds, \cite{RubinsteinWang} and 3-manifolds with geometric structures modelled on Sol, \cite{Zemke}.\\

There is a sense in which covering spaces not containing simple curves are generic. One reason for this is that any cover factoring through another cover that does not contain simple curves, will also not contain simple curves; some examples will be given in Section \ref{section2}. The simple loop conjecture is therefore making a very strong claim about maps of surfaces to 3-manifolds, and this paper raises doubts about whether the conjecture is verifiable algorithmically.\\

\textbf{Outline of paper.} Section \ref{section1} will revise some known properties of covering spaces, and Section \ref{section2} will construct examples of covering spaces that do not have simple curves. The techniques used in these examples will be used in Section \ref{section3} to give a proof of Theorem \ref{maintheorem}, by modifying Rabin's construction.

\section*{Acknowlegments}
This paper benefitted from helpful discussions with Andrew Putman (who suggested this problem in another context), Hyam Rubinstein and the group theorists and their visitors at the Technion, in particular Giles Gardam, Tobias Hartnick, Anton Khoroshkin and Michah Sageev. I would like to thank the Technion for its hospitality while this paper was written, and my host Yoav Moriah.

\section{Notation and assumptions}
Since the 2-sphere has no nontrivial covers, and for the torus, homological methods give a simple algorithm for finding all simple curves on a covering space, it will be assumed that the surface $S$ has genus $g$ at least 2.\\

When $S$ is orientable and has empty boundary, the usual presentation $\langle a_{1}, \ldots, a_{2}, b_{1}, \ldots, b_{g}\mid \Pi_{i=1}^{i=g} [a_{i}, b_{i}]\rangle$ is assumed, where the curves $\{a_{i}, b_{i}\}$ are shown in Figure \ref{basis} in the case $g=2$.\\

\begin{figure}
	\centering
	\includegraphics{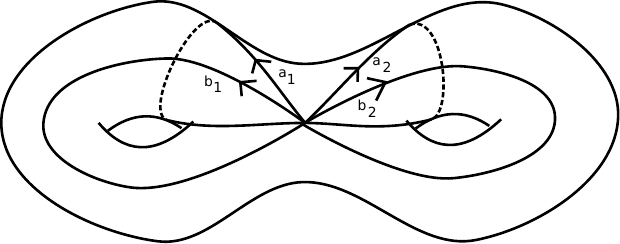}
	\caption{A set of simple curves representing a basis for $\pi_{1}(S)$.}
	\label{basis}
\end{figure}

A \emph{curve} on a surface is the image of a map of $S^1$ into the surface. In this convention, a curve is necessarily closed and connected. A curve $\gamma$ is said to be \textit{simple} if $\gamma$ is freely homotopic to an embedding of $S^1$. 

\section{Elementary properties of covers}
\label{section1}
This section recalls some properties of covering spaces. For this section and the next, it will be assumed that $S$ is orientable.\\

Given a regular cover $\tilde{S}$ of $S$, there is the following exact sequence, where $D$ denotes the deck transformation group of the cover.

\begin{equation*}
1\rightarrow \pi_{1}(\tilde{S})\rightarrow \pi_{1}(S) \xrightarrow{\phi} D \rightarrow 1
\end{equation*}

The image under $\phi$ of the generating set $\{a_{i}, b_{i}\}$ is a generating set for $D$. 

\begin{lem}
	\label{wellknownfact}
	Consider the presentation of $D$ given by
	\begin{equation}
		\langle \phi(a_{1}), \ldots, \phi(a_{g}), \phi(b_{1}), \ldots, \phi(b_{g})\mid r_{1}, \ldots, r_{s}\rangle
	\end{equation}
The subgroup $\pi_{1}(\tilde{S})$ of $\pi_{1}(S)$ is generated by the words $\{r_{i}'\}$ and their conjugates by elements of $\pi_{1}(S)$ that map to nontrivial elements of $D$. Here $r_{i}'$ is the word in the generating set of $\pi_{1}(S)$ obtained from $r_{i}$ by replacing each instance of $\phi(a_{j})$ by $a_{j}$ and $\phi(b_{j})$ by $b_{j}$.
\end{lem}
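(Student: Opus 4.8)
The plan is to reduce everything to a standard fact relating a presentation of a group to a presentation of one of its quotients. The exact sequence says $\phi$ realises $D$ as $\pi_{1}(S)/\pi_{1}(\tilde S)$, so I may forget the topology and work entirely inside $\pi_{1}(S)$, which comes equipped with its standard presentation $\langle a_{1},\dots,a_{g},b_{1},\dots,b_{g}\mid w_{0}\rangle$ with $w_{0}=\prod_{i=1}^{g}[a_{i},b_{i}]$ (and no relator $w_{0}$ in the case $\partial S\neq\emptyset$). First I would record the trivial-but-needed observation that each $r_{i}'$ lies in $\pi_{1}(\tilde S)$: substituting $a_{j}\mapsto\phi(a_{j})$ and $b_{j}\mapsto\phi(b_{j})$ into the word $r_{i}'$ is precisely applying $\phi$, and it returns the word $r_{i}$, which is trivial in $D$.

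The main step is to pass to the free group $F$ on the symbols $\{a_{i},b_{i}\}$ and write $q\colon F\twoheadrightarrow\pi_{1}(S)$ and $p\colon F\twoheadrightarrow D$ for the quotient maps coming from the two presentations, so that $\ker q=\langle\langle w_{0}\rangle\rangle_{F}$ and $\ker p=\langle\langle r_{1}',\dots,r_{s}'\rangle\rangle_{F}$. Since both $\phi\circ q$ and $p$ send each generator $a_{i}$ to $\phi(a_{i})$ (and $b_{i}$ to $\phi(b_{i})$), they coincide as homomorphisms $F\to D$; hence $\ker q\subseteq\ker p$, and
\[
\pi_{1}(\tilde S)=\ker\phi=q(\ker p)=q\bigl(\langle\langle r_{1}',\dots,r_{s}'\rangle\rangle_{F}\bigr),
\]
which is exactly the normal closure of $\{r_{1}',\dots,r_{s}'\}$ inside $\pi_{1}(S)$ (a surjection carries normal closures to normal closures). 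Equivalently, $\pi_{1}(\tilde S)$ is generated by all conjugates $g\,r_{i}'\,g^{-1}$ with $g\in\pi_{1}(S)$.

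Finally I would trim this generating set to the form asserted. Because $\pi_{1}(\tilde S)$ is normal in $\pi_{1}(S)$ and contains every $r_{i}'$, it contains all such conjugates, so it suffices to keep one conjugate per coset: fixing a transversal $T\ni 1$ of $\pi_{1}(\tilde S)$ in $\pi_{1}(S)$ (so $T$ is in bijection with $D$), Reidemeister--Schreier rewriting shows that $\{t\,r_{i}'\,t^{-1}:t\in T,\ 1\le i\le s\}$ already generates $\pi_{1}(\tilde S)$. The coset of $1$ contributes the words $r_{i}'$ themselves, the remaining cosets contribute conjugates by elements mapping to nontrivial elements of $D$, and conjugation by elements of $\pi_{1}(\tilde S)$ produces nothing new; this is the stated conclusion.

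The hard part is really only making the presentation-chase of the middle paragraph airtight — in particular verifying that the naturality of $\phi$ forces $\langle\langle w_{0}\rangle\rangle_{F}\subseteq\langle\langle r_{1}',\dots,r_{s}'\rangle\rangle_{F}$, and hence the identification of $\ker\phi$ with a normal closure inside $\pi_{1}(S)$. Everything else is bookkeeping, and the transversal (equivalently, Reidemeister--Schreier) is needed only for the cosmetic trimming in the last paragraph; one could instead simply state the lemma as ``$\pi_{1}(\tilde S)$ is the normal closure of $\{r_{1}',\dots,r_{s}'\}$ in $\pi_{1}(S)$''.
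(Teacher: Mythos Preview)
Your argument is correct in substance, and in fact supplies more than the paper does: the paper's own proof consists solely of the sentence ``This claim seems to be well-known. A proof is given in Lemma~4 of \cite{lifts}.'' So there is no approach to compare against --- you have written out what the paper merely cites.

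Your middle paragraph is the real content and is clean: passing to the free group $F$, identifying $\phi\circ q$ with $p$, and concluding $\ker\phi=q(\ker p)=\langle\langle r_1',\dots,r_s'\rangle\rangle_{\pi_1(S)}$ is exactly the standard presentation-theoretic argument, and it is airtight as you have written it. This already gives the lemma in the form you note at the end: $\pi_1(\tilde S)$ is the normal closure of $\{r_i'\}$ in $\pi_1(S)$, which is how the paper actually \emph{uses} the lemma (see the proof of the first corollary, where the phrase ``normally generated'' appears).

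One small caution on the trimming paragraph: what you invoke as ``Reidemeister--Schreier rewriting'' is not literally the Reidemeister--Schreier theorem (which produces the Schreier generators $tx\,\overline{tx}^{-1}$, not the transversal conjugates $tr_i't^{-1}$). The statement that transversal conjugates of the normal generators already generate the normal closure as a subgroup is true, but it requires its own short argument rather than a bare citation of Reidemeister--Schreier. That said, this is stronger than what the lemma asks (the lemma allows conjugation by \emph{all} elements mapping nontrivially to $D$, not merely a transversal), and --- as you yourself observe --- the lemma is most naturally read as ``$\pi_1(\tilde S)$ is the normal closure of $\{r_i'\}$,'' which your second paragraph already proves. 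So the looseness is cosmetic, not a gap.
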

\begin{proof}
	This claim seems to be well-known. A proof is given in Lemma 4 of \cite{lifts}.
\end{proof}
\begin{cor}[Corollary of Lemma \ref{wellknownfact}]
\label{corollary}
Any finitely presented group $G$ can be embedded in the deck transformation group of a cover of a closed surface of genus $g$.
\end{cor}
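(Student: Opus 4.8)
The plan is to realize $G$ as a subgroup of a quotient of $\pi_1(S)$. Regular covers $\tilde S \to S$ correspond to normal subgroups $N \trianglelefteq \pi_1(S)$, and the exact sequence above identifies the deck group with $\pi_1(S)/N$; so it is enough to produce a normal subgroup $N$ together with an embedding $G \hookrightarrow \pi_1(S)/N$.

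First I would reduce to the two-generated case. By the Higman--Neumann--Neumann embedding theorem, every countable group --- in particular the finitely presented group $G$ --- embeds in a group $H = \langle h_1, h_2 \rangle$ generated by two elements. (For the statement as given one does not need $H$ to be finitely presented.) Next I would observe that $H$ is automatically a quotient of $\pi_1(S)$: since $g \geq 2$, the assignment $a_1 \mapsto h_1$, $a_2 \mapsto h_2$, and $a_i \mapsto 1$, $b_i \mapsto 1$ for all remaining $i$ sends every factor $[a_i,b_i]$ of the surface relator $\prod_{i=1}^{g}[a_i,b_i]$ to $1$, hence defines a surjection $\psi \co \pi_1(S) \twoheadrightarrow H$. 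Taking $N = \ker\psi$ then gives a regular cover $\tilde S \to S$ whose deck group is $\pi_1(S)/N \cong H$, and this contains $G$, as required.

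I do not expect a real obstacle here: the one substantive ingredient is the HNN embedding theorem, and the surjection onto $H$ is immediate once one is allowed to kill all but two of the standard generators. The only refinement worth building in is that, if one also wants the deck group $D$ to be finitely presented (which is presumably what the later undecidability arguments will need), one should instead invoke the sharper fact that a finitely presented group embeds in a finitely presented two-generator group; the same construction then produces a finite presentation of $D$ in the generators $\phi(a_i),\phi(b_i)$ one of whose relators is the image of the surface relator, and Lemma \ref{wellknownfact} describes $\pi_1(\tilde S)$ explicitly from it.
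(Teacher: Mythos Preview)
Your proposal is correct and follows essentially the same strategy as the paper: embed $G$ into a two-generator group via the Higman--Neumann--Neumann theorem, then exhibit that two-generator group as a quotient of $\pi_1(S)$. The only cosmetic differences are that the paper first treats directly the case where $G$ already has fewer than $2g$ generators (so that $G$ itself becomes the deck group), and that the paper obtains the surjection by killing the superfluous generators and invoking the Freiheitssatz, whereas you kill all the $b_i$'s so that each commutator in the surface relator dies automatically --- your route is arguably cleaner and avoids the Freiheitssatz entirely.

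One point worth recording for later use (it is needed in the proof of Corollary~\ref{corollary2}): the HNN embedding can be arranged so that a chosen generator of $G$ is sent to a generator of the two-generator group, and hence to the image of the simple curve $a_1$ in the deck group.
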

\begin{proof}
Suppose $G:=\langle x_{1}, \ldots, x_{s}\mid r_{1}(x_{1}, \ldots, x_{s}), \ldots, r_{q}(x_{1}, \ldots, x_{s})\rangle$, where $s$ is less than $2g$. Let $\{g_{1}, \ldots, g_{2g}\}$ be a generating set of $\pi_{1}(S)$. Then by Lemma \ref{wellknownfact}, $G$ is the deck transformation group of the covering space corresponding to the subgroup of $\pi_{1}(S)$ normally generated by the words $\{r_{i}', g_{s+1}, \ldots, g_{2g}\}$. Here $r_{i}'$ is the word obtained from $r_{i}$ by replacing every instance of $x_{1}$ by the first generator, $g_1$, of $\pi_{1}(S)$, every instance of $x_{2}$ by the second generator of $\pi_{1}(S)$, etc. up to $x_{s}$. By the Freiheitssatz, $\{g_{1}, \ldots, g_{s}\}$ generates a free group, so all we are doing here is writing $G$ as a quotient of a free group in the usual way.\\

Now suppose $G$ has $n$ generators, where $n$ is larger than or equal to $2g$. It follows from the theory of HNN extensions that every countably generated group can be embedded in a group with two generators; a reference is Chapter 4 of \cite{LS}. A detail of this construction that will be needed later is that when embedding $G$ in a 2-generator group,  it is possible to choose a generator of $G$, and then construct the embedding in such a way that this generator is mapped to a generator of the 2-generator group. Embed $G$ in a two generator group $T$. Then $T$ is isomorphic to the deck transformation group of a cover of closed surface of genus $g$, as explained in the previous paragraph.
\end{proof}



\begin{cor}
	\label{corollary2}
	Let $c$ be simple curve represented by the conjugacy class of a word in $\pi_{1}(S)$, and $\tilde{S}\rightarrow S$ a regular cover of $S$. There are pairs of simple curves and covers for which there does not exist an algorithm to determine whether $c$ represents a word in the cover.
\end{cor}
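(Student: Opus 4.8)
The plan is to reduce the word problem of a fixed finitely presented group to the stated membership question, using Corollary~\ref{corollary} to convert instances of the word problem into covers of $S$. Fix, by Novikov--Boone, a finitely presented group $G_0=\langle y_1,\dots,y_k\mid R_0\rangle$ with unsolvable word problem. Given a word $w$ in the $y_i$, I would form the finitely presented group $G_w:=\langle t,y_1,\dots,y_k\mid R_0,\ t^{-1}w\rangle$, where $t$ is a new generator; the map $G_w\to G_0$ sending $t\mapsto w$ and fixing the $y_i$ is an isomorphism, so $t=_{G_w}1$ if and only if $w=_{G_0}1$, and a presentation of $G_w$ is computable from $w$.

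Next I would feed $G_w$ into Corollary~\ref{corollary}, taking $t$ as the distinguished generator. This produces, computably, a two--generator group $T_w$ together with an embedding $G_w\hookrightarrow T_w$ carrying $t$ to a generator of $T_w$, and then a regular cover $\tilde S_w\to S$ of the fixed genus--$g$ surface $S$ whose deck group is $T_w$ and whose associated surjection $\phi_w\colon\pi_1(S)\twoheadrightarrow T_w$ sends the first standard generator $a_1$ to that same generator. Lemma~\ref{wellknownfact} yields an explicit finite normal generating set for $\pi_1(\tilde S_w)\subset\pi_1(S)$, so the assignment $w\mapsto\tilde S_w$ is computable. By construction $a_1\in\pi_1(\tilde S_w)$ iff $\phi_w(a_1)=1$ in $T_w$; and since $\phi_w(a_1)$ is the image of $t$ under an injection, this holds iff $t=_{G_w}1$, that is, iff $w=_{G_0}1$.

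Finally I would take $c$ to be the simple closed curve $a_1$, which is manifestly simple (Figure~\ref{basis}), and pair it with the cover $\tilde S_w$. An algorithm that decided, for an arbitrary simple curve and regular cover, whether the curve is freely homotopic into the cover would, applied to the pairs $(a_1,\tilde S_w)$, solve the word problem of $G_0$ --- a contradiction. The one point that needs care, and the only real obstacle, is that the curve must be held fixed while the \emph{cover} is allowed to vary with $w$ (one cannot fix the cover, since the simple curves on $S$ need not hit enough of the deck group); this is exactly why the argument invokes the refinement noted in the proof of Corollary~\ref{corollary}, that a chosen generator of $G_w$ can be routed through the two--generator group onto $a_1$.
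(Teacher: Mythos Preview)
Your argument is correct and follows the same skeleton as the paper's: route a distinguished element through the embedding of Corollary~\ref{corollary} so that it lands on $\phi(a_1)$, and then observe that $a_1\in\pi_1(\tilde S)$ iff that element is trivial. The one genuine difference is the source of undecidability. The paper reduces from the \emph{triviality} problem: it takes a recursive family of presentations for which ``is $G$ trivial?'' is undecidable (implicitly the Adian--Rabin construction), fixes a presentation generator $g$, and uses that in those groups $g=1$ iff $G$ is trivial. You instead reduce directly from the \emph{word} problem of a single $G_0$ by the Tietze trick of adjoining a fresh generator $t$ with the relation $t=w$, so that $t$ is trivial in $G_w\cong G_0$ iff $w=1$. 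Your route is slightly more elementary (it needs only Novikov--Boone, not Adian--Rabin) and makes explicit the point the paper leaves tacit, namely that the chosen generator is nontrivial exactly when the input word is nontrivial; the paper's route, on the other hand, ties in more directly with the Rabin-style construction used later in Section~\ref{section3}.
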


\begin{proof}
To claim that this corollary follows from Corollary \ref{corollary} and the unsolvability of the word problem for finitely presented groups, it is necessary to show that the embedding of $G$ in the deck transformation group does not force us to identify all words that an algorithm is unable to distinguish from the identity with words that could not be simple in $\pi_{1}(S)$. \\
	
Let $G$ be a finitely presented group, with unsolvable word problem, for which there is no algorithm to decide whether or not $G$ is trivial. Fix a generator $g$ of $G$. In the proof of The proof of Corollary \ref{corollary} we saw that the embedding of $G$ in the deck transformation group $D$ can be done in such a way that $g$ is mapped to the image in $D$ of a simple curve $a_{1}$. If $G$ is the trivial group, then the image of $a_{1}$ is trivial in $D$, and hence by Lemma \ref{wellknownfact}, the simple curve $a_{1}$ represents a word in $\pi_{1}(\tilde{S})$. However, if $G$ is nontrivial, $g$ is a nontrivial element of $G$, and by Lemma \ref{wellknownfact}, $a_{1}$ does not represent a curve in the cover. This problem is therefore undecidable.
\end{proof}

\section{Covering spaces without simple curves}
\label{section2}
This section uses homological arguments and properties of normal subgroups of $[\pi_{1}(S), \pi_{1}(S)]$ to construct covering spaces without simple curves.\\

\textbf{Necessary conditions for curves to be simple. }A primitive homology class is an element of $H_{1}(S;\mathbb{Z})$ that is either trivial or can not be written as a multiple $k\in (\mathbb{Z}\setminus \{\pm 1\})$ of another element of $H_{1}(S;\mathbb{Z})$. On an orientable surface, a necessary condition for a curve to be simple is that it is a representative of a primitive homology class. For curves represented by words in $[\pi_{1}(S), \pi_{1}(S)]$, as proven in \cite{Notsimple}, a necessary condition for being simple is that the word is not contained in $[\pi_{1}(S),[\pi_{1}(S), \pi_{1}(S)]]$.\\

\textbf{Covers without simple curves. }An example of a covering space without simple curves is a covering with deck transformation group that can not be generated by any proper subset of $\{\phi(a_{1}),\ldots,\phi(a_{g}), \phi(b_{1}), \ldots, \phi(b_{g})\}$, and for which the words $\{r_{i}'\}$ in $\pi_{1}(S)$ from Lemma \ref{wellknownfact} are all in $[\pi_{1}(S),[\pi_{1}(S), \pi_{1}(S)]]$.\\

Another source of covers without simple curves  comes from Section 3.5 of \cite{lifts}. In \cite{lifts}, examples of covering spaces are given, for which connected components of pre-images of simple curves do not span the integer homology of the covering space. These covering spaces are constructed by iterating mod $m$ homology covers, as will now be explained briefly.\\

The mod $m$ homology cover is defined by 
\begin{equation}
\label{modm}
1\rightarrow \pi_{1}(\tilde{S})\rightarrow \pi_{1}(S) \xrightarrow{\phi} H_{1}(S;\mathbb{Z}_{m}) \rightarrow 1.
\end{equation}
Due to the fact that $\phi$ factors through the Hurewicz homomorphism, it follows from Lemma \ref{wellknownfact} that $[\pi_{1}(S),\pi_{1}(S)]\lhd \pi_{1}(\tilde{S})$. The commutator subgroup of $\pi_{1}(S)$ contains simple curves, so mod $m$ homology covers do have simple curves. However, the only simple curves in mod $m$ homology covers are contained in $[\pi_{1}(S),\pi_{1}(S)]$, because by Lemma \ref{wellknownfact}, none of the other elements of $\pi_{1}(\tilde{S})$ are contained in primitive homology classes in $H_{1}(S; \mathbb{Z})$. As shown in Lemma 3 of \cite{lifts}, none of the simple curves in $[\pi_{1}(S),\pi_{1}(S)]$ are contained in $[\pi_{1}(\tilde{S}),\pi_{1}(\tilde{S})]$, so any simple curves in a mod $m$ homology cover $\tilde{S}$ are nonseparating in $\tilde{S}$. Taking a mod $m$ homology cover $\tilde{\tilde{S}}$ of $\tilde{S}$ therefore kills off any simple curves that were in $\tilde{S}$. It follows that $\tilde{\tilde{S}}$ does not contain simple curves.\\

A characteristic covering space of $S$ is a cover invariant under the induced action of the mapping class group on $\pi_{1}(S)$.
The homomorphism $\phi$ from Equation \eqref{modm} is composed of two homomorphisms; the Hurewicz homomorphism, and the map from $H_{1}(S;\mathbb{Z})\rightarrow H_{1}(S;\mathbb{Z}_{2})$. Since both of these maps are invariant under the induced action of the mapping class group of $S$, it follows that mod $m$ homology covers are characteristic. Composing two mod $m$ homology covers to obtain $\tilde{\tilde{S}}$ therefore gives a regular cover of $S$.\\

\section{Proof of undecidability}
\label{section3}
This section starts with a discussion of properties of mod 2 homology covers. These will then be combined with the simpleness criteria from the previous section, and ideas coming from Rabin's construction \cite{Rabin}, to prove Theorem \ref{maintheorem}. 

\subsection{Mod 2 homology covers}
Some specific properties of mod 2 homology covers will now be discussed. 

\begin{lem}
	\label{m2}
	Let $\tilde{S}\rightarrow S$ be the mod 2 homology cover of $S$. Then $\pi_{1}(\tilde{S})$ is generated by $(2g)^{2}-1$ elements $\{g_{1}=c_{1}^{2}, \ldots, g_{n}=c_{n}^{2}\}$ and their conjugates by elements of $\pi_{1}(S)$ mapping to nontrivial elements of $D$. Here $c_{i}$ is a word representing a simple, nonseparating curve on $S$.
\end{lem}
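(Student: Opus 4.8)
The plan is to apply Lemma \ref{wellknownfact} directly to a convenient presentation of $D = H_1(S;\mathbb{Z}_2) \cong (\mathbb{Z}_2)^{2g}$. First I would record that the images $\phi(a_i), \phi(b_i)$ form a generating set of size $2g$ for $D$, and that the standard presentation of an elementary abelian $2$-group on these $2g$ generators has relators of exactly two types: the squares $\phi(x_k)^2$ for each generator $x_k \in \{a_1,\dots,b_g\}$, and the commutators $[\phi(x_k),\phi(x_\ell)]$ for each unordered pair $k \neq \ell$. The surface relator $\prod_i [a_i,b_i]$ becomes trivial in $D$ and so is already a consequence of the commutator relators; hence it contributes nothing new. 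Counting: there are $2g$ square relators and $\binom{2g}{2} = g(2g-1)$ commutator relators, for a total of $2g + g(2g-1) = 2g^2 + g$. Hmm — this does not immediately match $(2g)^2 - 1 = 4g^2 - 1$, so the first real task is to identify which presentation the author has in mind, or to rewrite the generating set accordingly.

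The resolution I would pursue: rather than the minimal presentation, use the presentation of $D$ whose relators are indexed so that each $r_i'$ is literally a square $c_i^2$ of a word representing a simple nonseparating curve. Each of the $2g$ basis generators $a_i, b_i$ is itself a simple nonseparating curve, giving $2g$ relators of the form $a_i^2, b_i^2$. For the commutation relations, I would write $[\phi(x_k),\phi(x_\ell)] = 1$ in the equivalent form $(\phi(x_k)\phi(x_\ell))^2 = 1$ (valid since both generators have order $2$ and the group is abelian mod the squares), so that $r' = (x_k x_\ell)^2$; the point is that for a suitable choice of pair $x_k, x_\ell$ the product $x_k x_\ell$ is again a simple nonseparating curve on $S$ (for instance $a_i b_i$, or $a_i a_j$, are simple after a homeomorphism since they are nonseparating and have primitive, indivisible homology class — one can invoke the change-of-coordinates principle, e.g. \cite{FM}). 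Doing this for all $\binom{2g}{2}$ pairs, together with the $2g$ squares, gives relators all of the form $c^2$ with $c$ simple nonseparating, and then Lemma \ref{wellknownfact} says $\pi_1(\tilde S)$ is normally generated (as stated, by the $r_i'$ and their conjugates by elements mapping nontrivially into $D$) by exactly these words. The count of curves is then $2g + \binom{2g}{2}$, and I would reconcile this with the stated $(2g)^2-1$ either by checking the author is using a different (possibly non-minimal or differently organized) relator set — e.g. one square relator $(g_ig_j)^2$ for every ordered pair $(i,j)$ with $i\neq j$ plus the $2g$ diagonal squares, which gives $2g + 2g(2g-1) = (2g)^2$, minus one redundancy — or by simply adopting whatever presentation makes the indexing come out to $n = (2g)^2-1$; the precise bookkeeping is routine once the mechanism is fixed.

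The one step that genuinely requires care — and which I expect to be the main obstacle — is verifying that each product-of-generators word appearing as an $r_i'$ can be taken to represent a \emph{simple} curve. Squares $a_i^2$ are visibly powers of simple curves, which is already enough for the homological simpleness criteria of Section \ref{section2} (a square is never primitive in homology), but the statement as written asks each $g_i$ to be literally $c_i^2$ with $c_i$ simple; so I must make sure every relator can be massaged into square form with a simple base curve. The abelian-group trick $[u,v] \leftrightarrow (uv)^2 u^{-2} v^{-2}$ reduces a commutator relator, modulo the square relators already present, to a single square $(uv)^2$, and then simplicity of $uv$ on $S$ follows from the change-of-coordinates principle whenever $uv$ is nonseparating, equivalently whenever its $\mathbb{Z}$-homology class is nonzero and indivisible — which holds for the relevant pairs of basis curves. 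I would lay this out pair by pair (or invoke a uniform mapping-class-group argument), note that the conjugating elements of $\pi_1(S)$ are unrestricted except for mapping to nontrivial elements of $D$ exactly as in Lemma \ref{wellknownfact}, and thereby conclude.
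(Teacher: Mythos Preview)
Your instinct is right: the whole point is that for $m=2$ every element of $D$ is an involution, so one can choose a presentation of $D$ in which every relator is a \emph{square}. Where you go astray is in the choice of presentation. You take the minimal Coxeter-style one (squares of the $2g$ generators together with $(x_k x_\ell)^2$ for pairs), and then cannot make the count come out. The paper instead takes the maximally redundant presentation: one relator $d^2$ for \emph{each} nontrivial element $d\in D$. The identity $ab=(ab)^{-1}=b^{-1}a^{-1}=ba$ shows that these squares alone already force commutativity, so this really is a presentation of $D$. That immediately explains the count: there are $|D|-1$ relators, one per nontrivial element. (Incidentally $(2g)^2-1$ equals $|D|-1=2^{2g}-1$ only when $g=2$; the intended number is the latter.)

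This choice of presentation also changes what you must check for simplicity. You only argue that $x_k$ and $x_k x_\ell$ are simple nonseparating, which suffices for your presentation but not for the paper's: there one needs a simple nonseparating representative for \emph{every} nontrivial class in $H_1(S;\mathbb{Z}_2)$. The paper does this concretely---given a subset of $\{a_i,b_j\}$ representing the class, replace any pair $a_i,b_i$ by the Dehn twist of $a_i$ about $b_i$, leaving at most $g$ pairwise disjoint simple curves, then band them together into a single simple curve. Your change-of-coordinates idea would also do the job (any nonzero class in $H_1(S;\mathbb{Z}_2)$ lifts to a primitive integral class, hence is represented by a simple nonseparating curve), but you would need to state it at that level of generality rather than just for products of two generators.
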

\begin{proof}
We first claim that a complete set of relations for the deck transformation group $D$ is given by $\{d_{i}^{2}, i=1, \ldots, (2g)^{2}-1\}$, where $d_{i}$ is one of the $(2g)^{2}-1$ nontrivial elements of $D$. Note that this is special to the case $m=2$, as it is only for $m=2$ that every element of $H_{1}(S; \mathbb{Z}_{m})$ is self inverse, which implies the commutation relations:
\begin{equation*}
ab=(ab)^{-1}=b^{-1}a^{-1}=ba
\end{equation*}
$H_{1}(S; \mathbb{Z}_{2})$ is by definition the abelian group with $2g$ generators, for which every group element has order two. This proves the claim.\\
  
Using Lemma \ref{wellknownfact}, it remains to prove that each of the nontrivial elements of $H_{1}(S;\mathbb{Z}_{2})$ has a simple curve representative. Fix an element $h$ of $H_{1}(S;\mathbb{Z}_{2})$. Then $h$ has as representative a subset of the set of curves $\{a_{i}, b_{j}\}$. If this subset contains both $a_{i}$ and $b_{i}$ for some $i$, replace the curves $a_{i}$ and $b_{i}$ by the simple curve obtained by Dehn twisting $a_{i}$ around $b_{i}$. Therefore, $h$ is represented by a set of at most $g$ simple, pairwise disjoint curves. It is therefore possible to take a connected sum to obtain a simple curve representing $h$. A set of simple curves representing the elements of $H_{1}(S;\mathbb{Z}_{2})$ is denoted by $\{c_{1}, \ldots, c_{(n)}\}$.
\end{proof}

\subsection{Tower of covers}
We now have all the ingredients to prove the main theorem of this paper.
\begin{thm*}
There does not exist an algorithm that determines whether a normal subgroup $\pi_{1}(\tilde{S})\lhd \pi_{1}(S)$ contains a word representing a simple curve on $S$.	
\end{thm*}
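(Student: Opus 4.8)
The plan is to graft Rabin's construction \cite{Rabin} for encoding the word problem into group presentations onto the covering spaces without simple curves built in Section \ref{section2}. Fix once and for all a finitely presented group $G$ with unsolvable word problem. I will describe, uniformly in a word $w$ over the generators of $G$, a regular cover $\tilde{S}_w\to S$ --- given concretely by a finite normal generating set for $\pi_1(\tilde{S}_w)\lhd\pi_1(S)$ --- so that $\tilde{S}_w$ contains a word representing a homotopically nontrivial simple curve on $S$ if and only if $w$ represents the trivial element of $G$ (written $w=_G 1$). Since $\{w : w=_G 1\}$ is not recursive, this yields Theorem \ref{maintheorem}. Recall the shape of Rabin's construction, in the form already used in the proof of Corollary \ref{corollary}: there is an algorithm which on input $w$ outputs a finite presentation of a group $\Gamma_w$ that is trivial exactly when $w=_G 1$, and by embedding into a two-generator group with a prescribed generator in the image one may take $\Gamma_w$ two-generated.

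The cover is built as a tower over the mod $2$ homology cover $\tilde{S}\to S$, whose deck group is $D=H_1(S;\mathbb{Z}_{2})$. By Lemma \ref{m2}, $\pi_1(\tilde{S})$ is the normal closure in $\pi_1(S)$ of squares $c_1^2,\ldots,c_n^2$ of certain simple nonseparating curves, and is itself a surface group of genus at least $2$. Working inside $\pi_1(\tilde{S})$, I would run Rabin's construction a second time, encoding $w$ by a finite set $\rho_w\subseteq\pi_1(\tilde{S})$ chosen subject to three requirements. (i) The quotient $E_w:=\pi_1(\tilde{S})/\langle\langle\rho_w\rangle\rangle_{\pi_1(\tilde{S})}$ is trivial if and only if $w=_G 1$. (ii) $\rho_w$ is invariant under the action of $D$ on $\pi_1(\tilde{S})$ --- for instance, replace $\rho_w$ by its $D$-orbit, which leaves (i) intact, because enlarging a normally generating set keeps it normally generating and the full collapse is still forced only when $w=_G 1$ --- so that $\pi_1(\tilde{S}_w):=\langle\langle\rho_w\rangle\rangle_{\pi_1(\tilde{S})}$ is normal in $\pi_1(S)$ and $\tilde{S}_w\to S$ is a regular cover. (iii) The modification of Rabin's recipe: the elements of $\rho_w$ are chosen among squares of simple curves on $\tilde{S}$, elements of $[\pi_1(S),[\pi_1(S),\pi_1(S)]]$, and further non-simple words of carefully controlled homology class, so that $\pi_1(\tilde{S}_w)$ inherits the structure that made the iterated mod $2$ homology covers of Section \ref{section2} free of simple curves.

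Granting such a $\rho_w$, the two directions go as follows. If $w=_G 1$, then by (i) $\langle\langle\rho_w\rangle\rangle_{\pi_1(\tilde{S})}=\pi_1(\tilde{S})$, so $\tilde{S}_w=\tilde{S}$, and $\tilde{S}$ contains, for instance, an essential separating simple closed curve of $S$ --- such a curve has trivial class in $H_1(S;\mathbb{Z}_{2})$ and so lies in $\pi_1(\tilde{S})$. Conversely, suppose $w\ne_G 1$ while some word in $\pi_1(\tilde{S}_w)$ represents a simple curve $c$ on $S$. Since $\pi_1(\tilde{S}_w)\subseteq\pi_1(\tilde{S})$, the reasoning recalled in Section \ref{section2} forces $c\in[\pi_1(S),\pi_1(S)]$ with $c\notin[\pi_1(S),[\pi_1(S),\pi_1(S)]]$, and, by Lemma 3 of \cite{lifts}, $c$ is nonseparating in $\tilde{S}$, hence primitive in $H_1(\tilde{S};\mathbb{Z})$. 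Arguing as in the double mod $2$ cover argument of Section \ref{section2}, but now relative to $\tilde{S}$, property (iii) should forbid any such element from lying in $\pi_1(\tilde{S}_w)$ unless $E_w$ is already trivial; hence $E_w=1$, so $w=_G 1$, a contradiction.

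The step I expect to be the genuine obstacle is property (iii) together with this last argument, where two constraints pull against each other. Forcing $E_w=1$ when $w=_G 1$ requires $\rho_w$ to normally generate $\pi_1(\tilde{S})$, so the classes of the elements of $\rho_w$ must generate all of $H_1(\tilde{S};\mathbb{Z})$; but squares of curves contribute only even classes, iterated commutators contribute too little, and the elements of $[\pi_1(S),\pi_1(S)]$ whose classes fill out $H_1(\tilde{S};\mathbb{Z})$ tend to be exactly the separating curves of $S$, which are themselves simple. One must therefore produce relators that are simultaneously non-simple on $S$, of the homology class needed to allow the total collapse, and ``deep'' enough that the obstructions of Section \ref{section2} still exclude simple curves from $\pi_1(\tilde{S}_w)$ whenever $E_w\ne 1$. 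This cannot be finessed by simply placing $\pi_1(\tilde{S}_w)$ inside a fixed subgroup already known to contain no simple curves, since whether a given finite relator set lies in such a subgroup is decidable and so could never coincide with the undecidable condition $w\ne_G 1$; Rabin's collapse mechanism and the simpleness obstruction of Section \ref{section2} must be genuinely interwoven in the choice of $\rho_w$.
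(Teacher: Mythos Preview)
Your plan has the same architecture as the paper's proof: factor through the mod~$2$ homology cover $S_2\to S$, run a Rabin-style construction over $\pi_1(S_2)$ to produce a further cover whose deck group collapses iff $w=_G 1$, and symmetrise under the deck group $D=H_1(S;\mathbb Z_2)$ to obtain a regular cover of $S$. You have also correctly located the crux at~(iii).

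The tension you describe in the last paragraph, however, rests on a conflation of $H_1(S;\mathbb Z)$ and $H_1(\tilde S;\mathbb Z)$. The simpleness obstruction (nonprimitivity) from Section~\ref{section2} lives in $H_1(S;\mathbb Z)$, whereas the requirement that $\rho_w$ normally generate $\pi_1(\tilde S)$ forces the images of $\rho_w$ to span $H_1(\tilde S;\mathbb Z)$. For a simple nonseparating curve $c_i$ on $S$, the element $g_i=c_i^{2}$ has \emph{even}, hence nonprimitive, class in $H_1(S;\mathbb Z)$; but $c_i\notin\pi_1(S_2)$, so $g_i$ is not a square in $\pi_1(S_2)$ and its class in $H_1(S_2;\mathbb Z)$ is primitive. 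Lemma~\ref{m2} says precisely that the $g_i$ and their $D$-translates normally generate $\pi_1(S_2)$. The paper exploits this: it takes the $g_i=c_i^{2}$ as the generators on which the Rabin group $D_w$ is built, uses Baumslag--Solitar--type HNN relators $g_k^{-1}g_lg_k=g_l^{2}$ (whose relator words again have even $H_1(S;\mathbb Z)$-class), and pushes the word-problem relators $R_i$ into $[\pi_1(S),[\pi_1(S),\pi_1(S)]]$ by rewriting them in the letters $a=[g_1,[g_1,g_2]]$ and $b=[g_2,[g_1,g_2]]$. Thus every normal generator of $\pi_1(\tilde S_2)$, and hence every element, has even class in $H_1(S;\mathbb Z)$; those with trivial $S$-homology lie in the double commutator. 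The balance you thought impossible is achieved once the two homology groups are disentangled.

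Your step~(ii) also needs more care than you give it. Replacing $\rho_w$ by its $D$-orbit and taking the normal closure can, a~priori, force collapse even when $w\ne_G 1$: the $D$-translates of the relators may interact. The paper avoids this by going the other direction: it forms the $D$-conjugate covers $S_{2,i}\to S_2$ and takes $\pi_1(\tilde S_2)$ to be the \emph{intersection} $\bigcap_i\pi_1(S_{2,i})$, checking directly that this intersection is $D$-invariant (hence normal in $\pi_1(S)$) and nontrivial when $w\ne_G 1$.
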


From now on, for simplicity of notation, the same symbols will be used to denote elements of $\pi_{1}(S)$, and their images in the deck transformation group of the cover.
\begin{proof}
Start off by assuming $S$ is connected, orientable and with empty boundary. Denote by $S_{2}$ the mod 2 homology cover of $S$. In what follows, a tower of covers 
\begin{equation*}
\tilde{S}_{2}\rightarrow S_{2}'\rightarrow S_{2}\rightarrow S
\end{equation*}
will be constructed. We first construct the deck transformation group of a cover $S_{2}'$ of $S_{2}$. To describe $S_{2}'$, let $H=\langle h_{1}, h_{2}, \ldots h_{m}\mid R_{1},\ldots, R_{s}\rangle$ be a finitely presented group with unsolvable word problem. The group $H\ast \langle x\rangle$ will be embedded in a two generator group $U$, as will now be explained.\\
	
Recall the definition of $\{g_{i}\}$ from Lemma \ref{m2}. This subset of a generating set of $\pi_{1}(S_{2})$ will be mapped to generators (denoted by the same symbols) of the deck transformation group of the cover $S_{2}'$ of $S_{2}$. By the Freiheitssatz, the $(2g)^{2}-1$ elements $\{g_{i}\}$ generate a free subgroup of $\pi_{1}(S_{2})$. A cover of $S_{2}$ will now be constructed, with deck transformation group generated by the images of the group elements $\{g_{i}\}$.\\

Let $x:=g_{1}$, $a:=[g_{1},[g_{1},g_{2}]]$ and $b:=[g_{2},[g_{1},g_{2}]]$. The commutator subgroup of $\pi_{1}(S_{2})$ is a free group, so the subgroup $[\pi_{1}(S_{2}),[\pi_{1}(S_{2}), \pi_{1}(S_{2})]]$ of the commutator subgroup is also free, and $a$ and $b$ freely generate a free subgroup of this group. Similarly, the set 
\begin{equation*}
\{a, b^{-1}ab, b^{-2}ab^{2}, \ldots, b^{-(m-1)}ab^{m-1}\}
\end{equation*}
freely generates a free subgroup of $\langle a,b \rangle$.\\
	
Let $\psi:H\ast \langle x\rangle\rightarrow \langle a,b\rangle$ be defined by $\psi(h_{i})=b^{-(i-1)}ab^{i-1}$, and let $U$ be the group with presentation
\begin{equation*}
U=\langle g_{1}, g_{2} \mid R_{1}' , \ldots, R_{s}'\rangle
\end{equation*}
where $R_{i}'$ is the word in the generators $g_{1}$ and $g_{2}$ obtained by taking $R_{i}$ and replacing every instance of $h_j$ with the word in $\langle g_{1}, g_{2}\rangle$ given by $\psi(h_{j})$.\\

\begin{rem}
It is necessary here that $\langle g_{i} \rangle$ generates a free group. As a result, the above construction gives an embedding of $H$ in a larger group, in the usual way. The same is true for the embeddings of groups that follow.
\end{rem}

Now let 
\begin{equation*}
J:=\langle U, g_{3}, g_{4}\mid g_{3}g_{1}g_{3}^{-1}=g_{1}^{2},\ \  g_{4}^{-1}g_{2}g_{4}=g_{2}^{2}\rangle
\end{equation*}
\begin{equation*}
K_{0}:=\langle J, g_{5}\mid g_{5}^{-1}g_{3}g_{5}=g_{3}^{2},\ \  g_{5}^{-1}g_{4}g_{5}=g_{4}^{2}\rangle
\end{equation*}
\begin{equation*}
K_{1}:=\langle K_{0}, g_{6}\mid g_{6}^{-1}g_{5}g_{6}=g_{5}^{2}\rangle
\end{equation*}
\begin{equation*}\vdots\end{equation*}
\begin{equation*}
K_{(2g)^{2}-8}:=\langle K_{(2g)^{2}-7},\ \  g_{(2g)^{2}-4}\mid g_{(2g)^{2}-4}^{-1}g_{(2g)^{2}-5}g_{(2g)^{2}-4}=g_{(2g)^{2}-5}^{2}\rangle
	\end{equation*}
	\begin{equation*}
	Q:=\langle g_{(2g)^{2}-3}, \ \ g_{(2g)^{2}-2}, \ \ g_{(2g)^{2}-1}\mid g_{(2g)^{2}-2}^{-1}g_{(2g)^{2}-3}g_{(2g)^{2}-2}=g_{(2g)^{2}-3}^{2},\ \  g_{(2g)^{2}-1}^{-1}g_{(2g)^{2}-2}g_{(2g)^{2}-1}=g_{(2g)^{2}-2}^{2}\rangle
	\end{equation*}
	\begin{equation*}
	D_{w}:=\langle K_{(2g)^{2}-7}\ast Q\mid g_{(2g)^{2}-3}=g_{(2g)^{2}-4},\ \  g_{(2g)^{2}-1}=[w,x]\rangle
	\end{equation*}
where $w$ is a word in the generators $a$ and $b$ of $H$. Note that, given a presentation for $H$, it is possible to write down a presentation $D_{w}=\langle g_{1}, \ldots, g_{(2g)^{2}-1}\mid r_{1}, \ldots, r_{M}\rangle$ for $D_{w}$.\\

By Lemma \ref{wellknownfact}, $D_{w}$ is isomorphic to the deck transformation group of a cover $S_{2}'$ of $S_{2}$, as follows:
\begin{equation*}
\langle g_{1}, \ldots, g_{(2g)^{2}-1}, d_{i}(g_{1}), \ldots, d_{i}(g_{(2g)^{2}-1}) \mid r_{k}(g_{1}, \ldots, g_{(2g)^{2}-1}),  d_{j}(g_{1})=d_{j}(g_{2})=\ldots=d_{j}(g_{(2g)^{2}-1})=1\rangle
\end{equation*}
where $i=1, \ldots, (2g)^{2}-1$,  $j=2, \ldots,  (2g)^{2}-1$, $k=1, \ldots, M$, and $d_{i}$ is an element of the deck transformation group of the cover $S_{2}\rightarrow S$.\\
	
Claim: The following are equivalent:
\begin{enumerate}
\item{$w=1\in H$}
\item{$S_{2}' \rightarrow S_{2}$ is the trivial cover.}
\end{enumerate}
To show that 1 implies 2, recall that by construction, $[w,x]$ is only trivial in the deck transformation group when $w$ is the trivial word in $H$. When $w$ is the trivial word in $H$, this implies $g_{(2g)^{2}-1}$ is trivial, from which it then follows that $g_{(2g)^{2}-2}$ is trivial, and hence $g_{(2g)^{2}-3}, \ldots, g_{1}$ are all trivial. In this case $S_{2}^{1}$ is the trivial cover of $S_{2}$. \\
		
To show that 2 implies 1, when $w$ is not the trivial word in $H$, $D_{w}=\langle g_{1}, \ldots, g_{(2g)^{2}-1}\mid r_{1}, \ldots, r_{M}\rangle$ is a nontrivial group. This follows from normal form theorems for HNN extensions and free products with amalgamation arising from the work of \cite{HNN}. A reference is Section 2 of Chapter IV of \cite {LS}. These normal form theorems also imply that none of the generators $g_{1}, \ldots, g_{(2g)^{2}-1}$ are trivial in $D_{w}$. This proves the claim.\\
	
Now please note that $S_{2}'$ is a regular cover of $S_{2}$, but $\pi_{1}(S_{2}')$ is not a normal subgroup of $\pi_{1}(S)$. 
We now want to find a regular cover $\tilde{S}_{2}$ of $S$ factoring through $S_{2}'$. A necessary condition is that the deck transformation group of the cover $\tilde{S}_{2}\rightarrow S_{2}$ is invariant under the induced action of the deck transformation group of the cover $S_{2}\rightarrow S$. Without this condition, it is not possible to find a deck transformation group, hence the cover could not be regular. We therefore consider the covering spaces $S_{2,i}$ of $S_{2}$, where $S_{2,i}$ has deck transformation group $\tilde{D}_{w, i}$ given by
	\begin{equation*}
	\label{compose}
	\langle d_{j}(g_{1}), \ldots, d_{j}(g_{(2g)^{2}-1}) \mid r_{k}(d_{i}(g_{1}), \ldots, d_{i}(g_{(2g)^{2}-1})),  d_{j}(g_{1})=\ldots=d_{j}(g_{(2g)^{2}-1})=I\rangle
	\end{equation*}
where $i=0,1,\ldots, (2g)^{2}-1$, the index $j=0,1, \ldots, i-1, i+1, \ldots,  (2g)^{2}-1$, the index $k=1, \ldots, M$, and $d_{i}$ is an element of the deck transformation group of the cover $S_{2}\rightarrow S$, where $d_{0}=1$.\\

The group $\pi_{1}(\tilde{S}_{2})$ is defined to be the largest normal subgroup of $\pi_{1}(S)$ in the intersection of all the groups $\pi_{1}(S_{2,i})$. Denote by $N$ the intersection in $\pi_{1}(S)$ of the groups $\pi_{1}(S_{2,i})$. To show that $\pi_{1}(\tilde{S}_{2})$ is nontrivial, we first show that $N$ is nonempty. This is a consequence of Lemma \ref{wellknownfact}, as words of the form $r_{k}(d_{i}(g_{1}), \ldots, d_{i}(g_{(2g)^{2}-1}))$, for $i=0,1,\ldots, (2g)^{2}-1$ and $k=1, \ldots, M$ are in the intersection.\\

To show that $N$ contains a nontrivial normal subgroup, first note that $N$ is invariant under conjugation by elements $\{a_{i}, b_{j}\}$ that are mapped to generators of the deck transformation group. This is because conjugation by such elements merely permutes the groups whose intersection is $N$. Since $N$ is invariant under conjugation by the generators of $\pi_{1}(S)$, it follows that $N$ is normal in $\pi_{1}(S)$, and hence $N=\pi_{1}(\tilde{S}_{2})$.\\

Given a presentation for $D_{w}$, a presentation for $\pi_{1}(\tilde{S}_{2})$ can be obtained. The relations of this presentation consist of the relations of $D_{w}$, the conjugates by $\{a_{i}, b_{j}\}$ of the relations of $D_{w}$, and the image of the relation $\Pi_{i=1,\ldots, g}[a_{i},b_{i}]$ in $\pi_{1}(S)$.\\


By construction, $\pi_{1}(\tilde{S}_{2})$ is a nontrivial cover of $S_{2}$ if and only if $w$ is a nontrivial word in $H$. In the former case, $\tilde{S}_{2}\rightarrow S$ is the cover $S_{2}\rightarrow S$, which we have seen does contain simple curves. In the latter case, none of the words in $\pi_{1}(\tilde{S}_{2})$ represent simple curves on $S$, as these words are all in $[\pi_{1}(S), [\pi_{1}(S), \pi_{1}(S)]]$ or are representatives of nonprimitive homology classes. \\

The cover $\tilde{S}_{2}\rightarrow S$ therefore has simple curves if and only if $w$ is a trivial word in $H$. An algorithm deciding whether covers have simple curves or not could therefore be used to solve the word problem for finitely presented groups. For connected, orientable surfaces with empty boundary, the theorem follows by contradiction.	\\      

If $S$ has nonempty boundary, an identical construction proves the theorem, with one small difference being that a generating set for $\pi_{1}(S)$ would then consist of curves $\{a_{i}, b_{j}\}$ as before, as well as some curves freely homotopic to boundary curves. Also, by first factoring through a double cover, the above construction applies when $S$ is not orientable. Similarly, the proof can be generalised to surfaces that are not connected in the obvious way.
\end{proof}

\bibliography{bib}
\bibliographystyle{plain}
\end{document}